\newtheorem{theorem}{Theorem}
\newtheorem{proposition}{Proposition}
\newtheorem{corollary}{Corollary}
\theoremstyle{remark}
\newcommand{\im}{\text{\rm Im }}
\begin{document}
\title[Harmonic measures]{Harmonic measures of slit sides perpendicular to the domain boundary}
\author[D.~Prokhorov, A.~Zakharov]{Dmitri Prokhorov and Andrey Zakharov}

\subjclass[2010]{Primary 30C35; Secondary 30C20, 30C80} \keywords{L\"owner equation, singular
solution, harmonic measure}
\address{D.~Prokhorov, A.~Zakharov: Department of Mathematics and Mechanics, Saratov State University, Saratov
410012, Russia} \email{ProkhorovDV@info.sgu.ru} \email{ZakharovAM@info.sgu.ru}

\begin{abstract}
The article is devoted to the geometry of solutions to the chordal L\"owner equation which is based
on comparison of singular solutions and harmonic measures for the sides of a slit in domains
generated by a driving term. It is proved that harmonic measures of two sides of a slit in the
upper half-plane which is perpendicular to the real axis are asymptotically equal to each other.
\end{abstract}
\maketitle

\section{Introduction}

The L\"owner parametric method is one of the powerful tools in geometric function theory. The
famous L\"owner equation was introduced in 1923 \cite{Lowner}. This article is devoted to the
geometry of solutions to the {\it chordal} L\"owner equation which is based on comparison of
singular solutions and harmonic measures for the sides of a slit in domains generated by a driving
term.

The chordal version of the L\"owner equation deals with the upper half-plane $\mathbb H=\{z: \im
z>0\}$, $\mathbb R=\partial \mathbb H$, and functions $f(z,t)$ normalized near infinity by
\begin{equation}
f(z,t)=z+\frac{2t}{z}+O\left(\frac{1}{z^2}\right) \label{exp}
\end{equation}
which solve the chordal L\"owner differential equation
\begin{equation}
\frac{df(z,t)}{dt}=\frac{2}{f(z,t)-\lambda(t)}, \quad f(z,0)\equiv z, \quad t\geq0, \label{Low}
\end{equation}
and map subdomains of $\mathbb H$ onto $\mathbb H$. Here $\lambda(t)$ is a real-valued
continuous driving term.

Let $\gamma(t)$ be a simple continuous curve in $\mathbb H\cup\{0\}$ with $\gamma(0)=0$ and $0\leq
t\leq T$. Then there is a unique map $f(z,t): \mathbb H\setminus\gamma[0,t]\to\mathbb H$ satisfying
the chordal L\"owner equation. It is known that $f(z,t)$ can be extended continuously to $\mathbb
R\cup\gamma(t)$, and $f(\gamma(t),t)=\lambda(t)$. Saying that $\gamma(t)\in C^1$ we mean that the
tangent vector $\gamma'(t)$ exists and varies continuously on $[0,T]$.

The extended function $f(z,t)$ maps $\gamma[0,t]$ onto a segment $I=[f_2(0,t),f_1(0,t)]$ while
$\mathbb R$ is mapped onto $\mathbb R\setminus I$. The two functions $f_1(0,t)$ and $f_2(0,t)$ are
singular solutions to the chordal L\"owner equation which pass through the singular point
$(f(0,0),0)=(0,0)$ of equation (\ref{Low}). The curve $\gamma$ has two sides $\gamma_1$ and
$\gamma_2$ which consist of the same points but define different prime ends, except for its tip.
The two parts $[f_2(0,t),\lambda(t)]$ and $[\lambda(t),f_1(0,t)]$ of segment $I$ are the images of
the two sides of the slit $\gamma[0,t]$ under $f(z,t)$.

The harmonic measures $\omega(f^{-1}(i,t);\gamma_k(t),\mathbb H\setminus\gamma(t))$ of
$\gamma_k(t)$ at $f^{-1}(i,t)$ with respect to $\mathbb H\setminus\gamma(t)$ are defined by the
functions $\omega_k$ which are harmonic on $\mathbb H\setminus\gamma(t)$ and continuously extended
on its closure except for the endpoints of $\gamma$, $\omega_k|_{\gamma_k(t)}=1$,
$\omega_k|_{\mathbb R\cup(\gamma(t)\setminus\gamma_k(t))}=0$, $k=1,2$, see, e.g., [3, \S3.6].
Denote
$$m_k(t):=\omega(f^{-1}(i,t);\gamma_k(t),\mathbb H\setminus\gamma(t)),\;\;\;k=1,2.$$

The main result of the article is given in Theorem 1 which we prove in Section 2.

\begin{theorem}Let $\gamma(t)\in C^1$, $\gamma(0)=0$, $\im\gamma(t)>0$ for $t>0$, be
perpendicular to the real axis $\mathbb R$, and let $f(z,t)$ map $\mathbb H\setminus\gamma[0,t]$
onto $\mathbb H$ and solve the chordal L\"owner differential equation (\ref{Low}) with the diving
function $\lambda(t)$. Then
$$\lim_{t\to+0}\frac{m_1(t)}{m_2(t)}=1.$$
\end{theorem}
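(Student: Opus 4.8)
\textit{Reduction.} The plan is first to use conformal invariance to turn the statement into a comparison of two lengths. Harmonic measure is conformally invariant, $f(\cdot,t)$ carries $f^{-1}(i,t)$ to $i$, the right side $\gamma_1(t)$ of the slit onto $[\lambda(t),f_1(0,t)]$, the left side $\gamma_2(t)$ onto $[f_2(0,t),\lambda(t)]$, and the remaining boundary onto the complementary part of $\mathbb R$. Hence
$$m_1(t)=\frac1\pi\bigl(\arctan f_1(0,t)-\arctan\lambda(t)\bigr),\qquad m_2(t)=\frac1\pi\bigl(\arctan\lambda(t)-\arctan f_2(0,t)\bigr).$$
As $t\to+0$ the hull $\gamma[0,t]$ shrinks to $\{0\}$, so $f_1(0,t),\lambda(t),f_2(0,t)\to0$; writing $\ell_1(t):=f_1(0,t)-\lambda(t)>0$ and $\ell_2(t):=\lambda(t)-f_2(0,t)>0$ for the lengths of the two image arcs, the mean value theorem gives $m_k(t)=\frac1\pi\ell_k(t)(1+o(1))$. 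So the theorem is equivalent to $\ell_1(t)/\ell_2(t)\to1$.

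\textit{Rescaling to a vertical slit.} Put $d(t):=\operatorname{diam}\gamma[0,t]\to+0$ and set $\widehat\gamma_t:=d(t)^{-1}\gamma[0,t]$, a simple arc of diameter $1$ based at $0$. Since $\gamma$ is a $C^1$ arc perpendicular to $\mathbb R$ at $0$, one has $\re z/|z|\to0$ along $\gamma$ near $0$, so $\sup_{z\in\gamma[0,t]}|\re z|\le\varepsilon(t)\,d(t)$ with $\varepsilon(t)\to0$, and therefore $\widehat\gamma_t$ converges in the Hausdorff metric to the vertical segment $J:=[0,i]$. Let $F_t\colon\mathbb H\setminus\widehat\gamma_t\to\mathbb H$ be the hydrodynamically normalized conformal map (this is $d(t)^{-1}f(d(t)\,\cdot\,,t)$), and let $F_*\colon\mathbb H\setminus J\to\mathbb H$, $F_*(z)=\sqrt{z^2+1}$, be the corresponding map of the limit domain; $F_*$ sends the two prime ends at $0$ to $\pm1$ and the tip $i$ to $0$. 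By the Carathéodory kernel theorem $\mathbb H\setminus\widehat\gamma_t\to\mathbb H\setminus J$ yields $F_t\to F_*$ locally uniformly on $\mathbb H\setminus J$. Now $F_t$ carries the right prime end at $0$, the left prime end at $0$, and the tip of $\widehat\gamma_t$ to $d(t)^{-1}f_1(0,t)$, $d(t)^{-1}f_2(0,t)$, $d(t)^{-1}\lambda(t)$, so the needed relation follows once these tend to $1,-1,0$:
$$\frac{\ell_1(t)}{\ell_2(t)}=\frac{d(t)^{-1}f_1(0,t)-d(t)^{-1}\lambda(t)}{d(t)^{-1}\lambda(t)-d(t)^{-1}f_2(0,t)}\longrightarrow\frac{1-0}{0-(-1)}=1.$$

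\textit{The main obstacle.} What remains, and what I expect to be the hard part, is to upgrade the interior convergence $F_t\to F_*$ to convergence of those three boundary values. The base $0$ is a pinch point at which $\mathbb H\setminus J$ is a union of two right-angled sectors; the perpendicularity and $C^1$ hypotheses guarantee that, \emph{uniformly for small $t$}, $\mathbb H\setminus\widehat\gamma_t$ looks near $0$ like the same pair of sectors, and that the tip of $\widehat\gamma_t$ is uniformly the endpoint of a $C^1$ arc. This uniform local geometry should give a uniform modulus of continuity for $\{F_t\}$ up to the boundary near $0$ and near the tip (for instance, a uniform distortion / harmonic-measure estimate: the $F_t$-image of a boundary crosscut of small diameter through such a sector has small diameter), which combined with the interior convergence on small circles $\{|z|=r\}$ pins $d(t)^{-1}f_1(0,t),\,d(t)^{-1}f_2(0,t),\,d(t)^{-1}\lambda(t)$ to within $o(1)$ of $1,-1,0$. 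Perpendicularity is essential here: for a slit leaving $\mathbb R$ at an angle $\neq\pi/2$ the two sectors have unequal openings and $\ell_1/\ell_2$ does \emph{not} tend to $1$, as one checks already for the straight slit with driving $\lambda(t)=c\sqrt t$. Alternatively, the last step can be carried out through the singular solutions directly: with $u=f_1(0,t)-\lambda(t)$, $v=\lambda(t)-f_2(0,t)$ one has $\dot u=2/u-\dot\lambda$, $\dot v=2/v+\dot\lambda$, hence $u(t)^2=4t-2\int_0^t u\,\dot\lambda\,ds$ and $v(t)^2=4t+2\int_0^t v\,\dot\lambda\,ds$; using the a priori diameter bound $u+v=O(\sqrt t)$ together with $\lambda(t)=o(\sqrt t)$ and the regularity of $\lambda$ on $(0,T]$ coming from $\gamma\in C^1$, an integration by parts shows both integrals are $o(t)$, so $u(t)^2,v(t)^2=4t(1+o(1))$ and $u/v\to1$. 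In that route the obstacle is the same in substance: establishing $\lambda(t)=o(\sqrt t)$ for a $C^1$ slit perpendicular at the origin.
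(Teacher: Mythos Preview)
Your overall strategy---reduce via conformal invariance to the ratio $\ell_1/\ell_2$, rescale so the slit converges to a fixed vertical segment, and read off the three boundary values from the limiting map---is precisely the paper's approach. The difference is that you leave the decisive step open, while the paper closes it with two ingredients. First, the paper imports from \cite{IPV} the arc-length asymptotic $s(t)=A\sqrt t+o(\sqrt t)$ for a $C^1$ perpendicular slit (relation~(\ref{len})); this fixes the $\sqrt t$ scale a priori and makes the rescaled slits $\sqrt n\,\gamma(t/n)$ converge to the fixed vertical segment $[0,iA\sqrt t]$. Second---and this is what resolves the ``main obstacle'' you correctly flag---the paper invokes the Rad\'o--Markushevich theorem: because the slits converge, the rescaled inverse maps $g_n\colon\mathbb H\to\mathbb H\setminus\gamma_n(t)$ converge to $h(w,t)=\sqrt{w^2-A^2t}$ \emph{uniformly on $\mathbb H\cup\mathbb R$}, and this boundary-uniform convergence forces the intervals $I_{kn}$ to converge to $[0,A\sqrt t]$ and $[-A\sqrt t,0]$. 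Your $d(t)$-scaling is a harmless variant (it even sidesteps the need for~(\ref{len}) to get Hausdorff convergence of the rescaled slits) and could be completed the same way, by applying Rad\'o--Markushevich to $F_t^{-1}$; but as written you only sketch a boundary-continuity heuristic, so the proof is incomplete exactly where you say it is.

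Your alternative ODE route is more delicate: it hinges on $\lambda(t)=o(\sqrt t)$, which the paper does not establish independently of Theorem~1. The nearest statement is Corollary~1, and that is \emph{derived from} Theorems~1 and~2; moreover it only excludes a leading term $c\sqrt t$ with $c\neq0$, not the full $o(\sqrt t)$ you need. So this route, while the integration-by-parts computation is fine once $\lambda(t)=o(\sqrt t)$ is available, would require an independent argument for that asymptotic which neither you nor the paper supplies.
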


Theorem 1 has an interplay with Theorem 2 and its Corollary 1 proved in Section 4.

\begin{theorem}
Let $$\lambda(t)=c\sqrt t+o(\sqrt t),\;\;\;t\to0,$$ be the driving function in the chordal L\"owner
differential equation (\ref{Low}). Then the singular solutions $f_1(0,t)$, $f_2(0,t)$ to(\ref{Low})
satisfy the following conditions
$$\lim_{t\to0}\frac{f_1(0,t)}{\sqrt t}=\frac{c+\sqrt{c^2+16}}{2},\;\;\;
\lim_{t\to0}\frac{f_2(0,t)}{\sqrt t}=\frac{c-\sqrt{c^2+16}}{2}.$$
\end{theorem}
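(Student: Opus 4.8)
The plan is to rescale each singular solution by $\sqrt t$ and study the resulting first-order equation, in which the two prescribed limits appear as the roots of a quadratic. Write $a_1=\frac{c+\sqrt{c^2+16}}{2}$ and $a_2=\frac{c-\sqrt{c^2+16}}{2}$ for the roots of $w^2-cw-4=0$, so that $a_2<c<a_1$, $a_1a_2=-4$ and $a_1(a_1-c)=4=a_2(a_2-c)$. Fix $k\in\{1,2\}$ and put $g(t)=f_k(0,t)$; it solves $g'(t)=2/(g(t)-\lambda(t))$ on $(0,T]$ with $g(0^+)=0$, and $g-\lambda$ has constant sign there (positive for $k=1$, negative for $k=2$), being continuous and non-vanishing on $(0,T]$. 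Substituting $g(t)=\sqrt t\,w(s)$ with $s=-\ln t$ and $\varepsilon(s):=e^{s/2}\lambda(e^{-s})-c\to0$ ($s\to\infty$) turns (\ref{Low}) into
\[
\frac{dw}{ds}=\frac w2-\frac{2}{w-c-\varepsilon(s)}=\frac{(w-a_1)(w-a_2)-w\,\varepsilon(s)}{2\bigl(w-c-\varepsilon(s)\bigr)} .
\]
Setting $\varepsilon\equiv0$ and looking for constant solutions gives back $w^2-cw-4=0$, i.e. $w\in\{a_1,a_2\}$; so the theorem is a selection principle — it says that the \emph{actual} singular solution picks out $a_1$ for $k=1$ and $a_2$ for $k=2$, although both equilibria are repelling for the $s$-flow and thus cannot be reached by a soft stability argument.

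I would carry out the case $k=1$ in full, the case $k=2$ being symmetric under the exchanges "$w>c+\varepsilon$" $\leftrightarrow$ "$w<c+\varepsilon$" and "escape to $+\infty$" $\leftrightarrow$ "escape to $-\infty$". For $k=1$ I would use two properties of the singular solution: (i) $w(s)>c+\varepsilon(s)$ for every $s$, since $g(t)>\lambda(t)$ \emph{strictly} for $t>0$; and (ii) $e^{-s/2}w(s)=g(t)\to0$ as $s\to\infty$. The aim is $\limsup_{s\to\infty}w(s)\le a_1\le\liminf_{s\to\infty}w(s)$, which forces $w(s)\to a_1$, i.e. $f_1(0,t)/\sqrt t\to a_1$. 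Both inequalities rest on the remark that, for $w>c$, the function $\tfrac w2-\tfrac2{w-c}$ is strictly increasing and vanishes exactly at $w=a_1$ (this is the identity $a_1(a_1-c)=4$); hence for $s$ large, so that $|\varepsilon(s)|$ is small, $dw/ds$ is bounded below by a positive constant on $\{w\ge a_1+\delta\}$ and above by a negative constant on $\{c+\varepsilon(s)<w\le a_1-\delta\}$, so both regions are forward invariant.

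For $\limsup\le a_1$: if $w(s_1)\ge a_1+\delta$ for some large $s_1$, then $w$ increases for all $s\ge s_1$ and, once $w$ is large, $dw/ds=\tfrac w2-\tfrac2{w-c-\varepsilon}\ge\tfrac w4$, so $w(s)\to\infty$; writing $\tfrac d{ds}\ln w=\tfrac12-\tfrac2{w(w-c-\varepsilon)}$ and using that the $O(w^{-2})$ correction is integrable along an exponentially growing orbit, one obtains $e^{-s/2}w(s)\to C>0$, i.e. $g(0^+)=C>0$, contradicting (ii). For $\liminf\ge a_1$: if $w(s_1)<a_1-\delta$ for some large $s_1$, then $w$ is trapped in $\{c+\varepsilon<w\le a_1-\delta\}$ — it cannot leave upward, where $dw/ds<0$, nor downward by (i) — and there $dw/ds$ is bounded above by a negative constant, so $w$ decreases at a uniform rate until it meets the level $c+\varepsilon(s)$, i.e. $g(t)=\lambda(t)$ at some $t>0$, contradicting (i). Since $\delta>0$ was arbitrary this gives $w(s)\to a_1$, and the mirror-image argument gives $f_2(0,t)/\sqrt t\to a_2$.

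The step I expect to be the main obstacle is the upper bound. It does not suffice to know that an escaping orbit is unbounded; one needs the sharper rate $w(s)\sim Ce^{s/2}$ with $C>0$, equivalently the integrability of the $O(w^{-2})$ term in $\tfrac d{ds}\ln w$, since only this pins $g(0^+)$ down to a positive value and so closes the contradiction with (ii). The remaining ingredients are soft phase-line analysis. One minor technical caveat is that $\lambda$, hence $\varepsilon$, is assumed only continuous, so $\varepsilon$ cannot be differentiated and every comparison argument must be run through pointwise bounds of the form $|\varepsilon(s)|\le\eta$ rather than through $\varepsilon'$.
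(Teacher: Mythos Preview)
Your proof is correct and reaches the conclusion by a route genuinely different from the paper's. Both start from the rescaled quantity $f_1(0,t)/\sqrt t$ and recognise the target $a_1=(c+\sqrt{c^2+16})/2$ as a root of $w^2-cw-4$, but the paper stays in the $t$-variable and proceeds by an iterative bootstrap: a first lower bound $f_1(0,t)\ge g_1(\epsilon')\sqrt t$ is obtained from a logarithmic-divergence contradiction (integrating an inequality of the form $dg/dt>p/t$), then substituted back into the L\"owner equation and integrated to produce an upper bound $f_1(0,t)\le k_1''\sqrt t$; these two steps are alternated to generate sequences $k_n'\nearrow a_1$ and $k_n''\searrow a_1$. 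Your logarithmic time change $s=-\ln t$, $w(s)=f_1(0,t)/\sqrt t$ converts the problem into a nearly-autonomous flow with repelling equilibria at $a_1,a_2$ and a vanishing perturbation $\varepsilon(s)$, and you then obtain both bounds in one pass by trapping-region arguments: $\limsup w>a_1$ is ruled out via the escape estimate $e^{-s/2}w(s)\to C>0$ (contradicting $g(0^+)=0$), and $\liminf w<a_1$ is ruled out because a uniform decrease of $w$ would force $w$ down to the barrier $c+\varepsilon(s)$ (contradicting $f_1>\lambda$). The paper's iteration is entirely elementary but somewhat indirect; your dynamical-systems viewpoint is more conceptual, makes the role of the equilibria transparent, and dispenses with the recursive sharpening altogether. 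One small point of phrasing: in your $\liminf$ step the contradiction is cleanest stated as ``$w(s)$ stays above $c-\eta$ yet is forced below it in finite $s$-time'', rather than as ``$w$ meets $c+\varepsilon(s)$'', since the latter level is moving; the logic is sound either way.
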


\begin{corollary}
Let $$\lambda(t)=c\sqrt t+o(\sqrt t),\;\;\;t\to0,\;\;\;c\neq0,$$ be the driving function in the
chordal L\"owner differential equation (\ref{Low}), and let the solution $f(z,t)$ to (\ref{Low})
map $\mathbb H\setminus\gamma(t)$ onto $\mathbb H$, where $\gamma(t)$ is a $C^1$ simple curve. Then
$\gamma(t)$ is not perpendicular to the real axis $\mathbb R$ at the origin.
\end{corollary}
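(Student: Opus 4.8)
The plan is to argue by contradiction, playing Theorem 2 off against Theorem 1. Suppose, to the contrary, that the $C^1$ curve $\gamma(t)$ is perpendicular to $\mathbb{R}$ at the origin. Then all hypotheses of Theorem 1 are satisfied, so $\lim_{t\to+0} m_1(t)/m_2(t)=1$. I will compute this same limit directly from Theorem 2, obtain a different value, and thereby get a contradiction.

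For the computation I would use the conformal invariance of harmonic measure. Since $f(\cdot,t)$ maps $\mathbb{H}\setminus\gamma[0,t]$ conformally onto $\mathbb{H}$, carrying $f^{-1}(i,t)$ to $i$ and the two sides $\gamma_1(t),\gamma_2(t)$ onto the boundary segments $[\lambda(t),f_1(0,t)]$ and $[f_2(0,t),\lambda(t)]$, one has
$$m_1(t)=\omega\bigl(i;[\lambda(t),f_1(0,t)],\mathbb{H}\bigr),\qquad m_2(t)=\omega\bigl(i;[f_2(0,t),\lambda(t)],\mathbb{H}\bigr).$$
Now I would insert the elementary formula $\omega(i;[a,b],\mathbb{H})=\frac1\pi\bigl(\arg(i-b)-\arg(i-a)\bigr)$ together with the expansion $\arg(i-x)=\frac\pi2+x+O(x^2)$ as the real variable $x\to0$. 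By Theorem 2 and the hypothesis $\lambda(t)=c\sqrt t+o(\sqrt t)$, the three points $\lambda(t),f_1(0,t),f_2(0,t)$ are all $O(\sqrt t)$, so this gives
$$m_1(t)=\frac{f_1(0,t)-\lambda(t)}{\pi}+O(t),\qquad m_2(t)=\frac{\lambda(t)-f_2(0,t)}{\pi}+O(t).$$

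Dividing by $\sqrt t$ and letting $t\to0$, Theorem 2 then yields
$$\frac{m_1(t)}{m_2(t)}\longrightarrow\frac{\dfrac{c+\sqrt{c^2+16}}2-c}{\,c-\dfrac{c-\sqrt{c^2+16}}2\,}=\frac{\sqrt{c^2+16}-c}{\sqrt{c^2+16}+c},$$
the denominator limit $\frac{c+\sqrt{c^2+16}}2$ being strictly positive, so the quotient is legitimate. For $c\neq0$ this value is $\neq1$ (it is $<1$ for $c>0$ and $>1$ for $c<0$), contradicting the conclusion $m_1(t)/m_2(t)\to1$ of Theorem 1. Hence $\gamma(t)$ cannot be perpendicular to $\mathbb{R}$ at the origin. (With the opposite labeling of the two sides the limit above is just replaced by its reciprocal, which is still $\neq1$.)

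The argument is short once Theorems 1 and 2 are granted; the one step deserving real care is the asymptotic reduction — checking that the $O(t)$ contributions coming from the quadratic part of $\arg(i-x)$ are genuinely negligible against the $O(\sqrt t)$ main terms, so that they do not affect the limit of the ratio $m_1/m_2$. That is where I would concentrate the bookkeeping, but it reduces to the elementary estimates displayed above.
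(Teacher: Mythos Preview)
Your argument is correct and follows the same strategy as the paper: assume perpendicularity, then derive a contradiction by playing Theorem~1 against Theorem~2. The paper's version is terser and casts the contradiction in terms of the leading asymptotic coefficients of $f_1(0,t)$ and $f_2(0,t)$ themselves (equal in absolute value from the analysis behind Theorem~1, unequal by Theorem~2 when $c\neq0$), whereas you translate everything back to the ratio $m_1/m_2$ via the explicit harmonic-measure formula and compute the limit $(\sqrt{c^2+16}-c)/(\sqrt{c^2+16}+c)\neq1$ directly; these are two equivalent ways of reading off the same incompatibility.
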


The most important argument in the proof of Theorem 1 is the fact \cite{IPV} that for the
arc-length parameter $s$ of the $C^1$-slit which is perpendicular to $\mathbb R$, the function
$s=s(t)$ is expanded as
\begin{equation}
s=s(t)=A\sqrt t+o(\sqrt t),\;\;\;A\neq0,\;\;\;s\to+0. \label{len}
\end{equation}

This result can be compared with the results in \cite{Earle}.

\section{Proof of Theorem 1}

{\it Proof of Theorem 1.} Let $w=f(z,t)$ map $\mathbb H\setminus\gamma(t)$ onto $\mathbb H$, and
let the $C^1$-slit $\gamma$ satisfy the conditions of Theorem 1. For the arc-length parameter $s$,
$\gamma=\gamma(s)$ has the representation
\begin{equation}
\gamma(s)=is+o(s),\;\;\;s\to+0. \label{arc}
\end{equation}
From the other side, the arc-length parameter $s$ is expanded according to (\ref{len}). Substitute
(\ref{len}) in (\ref{arc}) and obtain
$$\gamma(s(t))=iA\sqrt t+o(\sqrt t)=iA\sqrt t+\alpha(t)\sqrt t,\;\;\;\lim_{t\to+0}\alpha(t)=0.$$

The function $$h(w,t)=\sqrt{w^2-A^2t}$$ maps $\mathbb H\cup\mathbb R$ onto $(\mathbb
H\setminus[0,iA\sqrt t])\cup\mathbb R$, $h(0,t)=iA\sqrt t$.

Denote $$z=g_n(w,t):=\sqrt nf^{-1}\left(\frac{w}{\sqrt n},\frac{t}{n}\right),\;\;\;n=1,2,\dots\;.$$
The function $z=f^{-1}(w,t)=g_1(w,t)$ maps $\mathbb H$ onto $\mathbb H\setminus\gamma(t)$,
$g_1(\lambda(t),t)=\gamma(t)$, $\gamma(t):=\gamma(s(t))$. So the functions $g_n(w,t)$ map $\mathbb
H$ onto $\mathbb H\setminus\gamma_n(t)$ where
$$\gamma_n(t)=\sqrt n\,\gamma\left(\frac{t}{n}\right)=
iA\sqrt t+\alpha\left(\frac{t}{n}\right)\sqrt t,\;\;\; g_n\left(\sqrt
n\lambda\left(\frac{t}{n}\right),t\right)=\gamma_n(t).$$

As soon as $$|\gamma_n(t)-iA\sqrt t|=|\alpha\left(\frac{t}{n}\right)|\,\sqrt t$$ tends to 0 as
$n\to\infty$ uniformly with respect to $t\in[0,T]$, the sequence of functions $g_n(w,t)$ converges
to $h(w,t)$ as $n\to\infty$ uniformly on $\mathbb H\cup\mathbb R$ according to the Rad\'o theorem
\cite{Rad} generalized by Markushevich \cite{Mar}, see also [2, p.60].

The uniform convergence of $g_n(w,t)$ to $h(w,t)$ implies convergence of corresponding coefficient
sequences in boundary hydrodynamic normalization (\ref{exp}). Definition of $g_n(w,t)$ gives the
expansion $$g_n(w,t)=w-\frac{2t}{w}+O\left(\frac{1}{w^2}\right),\;\;\;w\to\infty.$$ The function
$h(w,t)$ is expanded as $$h(w,t)=w-\frac{A^2t}{2w}+O\left(\frac{1}{w^2}\right),\;\;\;w\to\infty.$$
Hence $A=2$.

Denote by $\Gamma_1(t)$ the "right" side of the segment $[0,i2\sqrt t]$ and by $\Gamma_2(t)$ the
"left" side of this segment.

Let $\gamma_{1n}(t)$, $\gamma_{2n}(t)$ be the two sides of $\gamma_n(t)$ which are mapped onto the
segments $I_{1n}\subset\mathbb R$, $I_{2n}\subset\mathbb R$ under $g_n^{-1}(z,t)$, respectively.
The uniform convergence of $g_n$ to $h$ implies that $I_{1n}(t)$ tends to $[0,A\sqrt t]$ and
$I_{2n}(t)$ tends to $[-A\sqrt t,0]$ as $n\to\infty$.

Let $\gamma'_{1n}(t)$, $\gamma'_{2n}(t)$ be the two sides of $\gamma({t\over n})$ which are mapped
onto the segments $I'_{1n}\subset\mathbb R$, $I'_{2n}\subset\mathbb R$ under $f(z,{t\over n})$,
respectively. Comparing $I_{kn}$ and $I'_{kn}$, we see that $\text{meas}I_{kn}=\sqrt n\,\text{meas}
I'_{kn}$, $k=1,2$, $n\geq1$.

The harmonic measures $\omega(i;I'_{kn}(t),\mathbb H)$ of $I'_{kn}(t)$ at $i$ with respect to
$\mathbb H$ equal the angle divided over $\pi$ under which the segment $I'_{kn}(t)$ is seen from
the point $i$. Similarly, the harmonic measures $\omega(i;I_{kn}(t),\mathbb H)$ of $I_{kn}(t)$ at
$i$ with respect to $\mathbb H$ equal the angle divided over $\pi$ under which the segment
$I_{kn}(t)$ is seen from the point $i$, $k=1,2$, $n\geq1$, see, e.g., [2, p.334].

Now the following equalities
$$\lim_{t\to+0}\frac{m_1(t)}{m_2(t)}=\lim_{n\to\infty}\frac{m_1({t\over n})}{m_2({t\over n})}=
\lim_{n\to\infty}\frac{\omega(f^{-1}(i,{t\over n}),\gamma_1({t\over n}),\mathbb
H\setminus\gamma({t\over n}))}{\omega(f^{-1}(i,{t\over n}),\gamma_2({t\over n}),\mathbb
H\setminus\gamma({t\over n}))}=$$
$$\lim_{n\to\infty}\frac{\omega(i,I'_{1n}(t),\mathbb H)}{\omega(i,I'_{2n}(t),\mathbb H)}=
\lim_{n\to\infty}\frac{\tan(\pi\omega(i,I'_{1n}(t),\mathbb H))}{\tan(\pi\omega(i,I'_{2n}(t),\mathbb
H))}=\lim_{n\to\infty}\frac{\text{meas}I'_{1n}}{\text{meas}I'_{2n}}=$$
$$\lim_{n\to\infty}\frac{\text{meas}I_{1n}}{\text{meas}I_{2n}}=
\frac{\text{meas}[0,i2\sqrt t]}{\text{meas}[-i2\sqrt t,0]}=1$$ lead to the conclusion desired in
Theorem 1.

This chain contains 8 equality signs. We have to comment almost each of them.

The first equality sign needs a more strict explanation. In order to reduce the limit as $t\to+0$
to the limit as $n\to\infty$ it is necessary to choose an arbitrary sequence $\{j_n\}$ of positive
numbers $j_n$ such that $j_n\to\infty$ as $n\to\infty$. All the arguments for $g_n$ should be
repeated for $g_{j_n}$. We omitted the details and chose the sequence of natural numbers because of
simplicity and evidence of repeating the arguments.

The second step uses the definition of $m_k(t)$. The third step uses the invariance of harmonic
measures with respect to conformal map $f(z,{t\over n})$. The following step uses the limit
property of the ratio of infinitesimal functions. The next step is based on elementary
trigonometric formulas and limit procedures.

The 6-th step takes into account that the segments $I_{kn}$ and $I'_{kn}$ are proportional. The
7-th resulting step appeared because of uniform convergence of $g_n(w,t)$ to $h(w,t)$ which implies
that the pre-images $I_{kn}(t)$ of the sides $\gamma_{kn}(t)$ of $\gamma_n(t)$ under $g_n(w,t)$
tend to the corresponding pre-images of the sides $\Gamma_k(t)$ of the segment $[0,i2\sqrt t]$
under $h(w,t)$, $k=1,2$. The final step is clear.

This completes the proof.

Theorem 1 can be generalized for $C^1$-slits $\gamma(t)\subset\mathbb H$, $\gamma(0)=0$, which are
tangential at the origin to the straight line under the angle ${\pi\over2}(1-c)$ to $\mathbb R$,
$-1<c<1$, provided the asymptotic relation (\ref{len}) is valid.

\begin{proposition}
Let $\gamma(t)\in C^1$, $\gamma(0)=0$, $\im\gamma(t)>0$ for $t>0$, be tangential at the origin to
the straight line under the angle ${\pi\over2}(1-c)$, $-1<c<1$, to the real axis $\mathbb R$, and
let $f(z,t)$ map $\mathbb H\setminus\gamma[0,t]$ onto $\mathbb H$ and solve the chordal L\"owner
differential equation (\ref{Low}). Then
$$\lim_{t\to+0}\frac{M_1(t)}{M_2(t)}=\frac{1-c}{1+c},$$ where
$$M_k(t):=\omega(f^{-1}(i,t);\gamma_k(t),\mathbb H\setminus\gamma(t)),\;\;\;k=1,2,$$ $\gamma_1(t)$
and $\gamma_2(t)$ are the two sides of $\gamma(t)$, provided the asymptotic relation (\ref{len}) is
valid.
\end{proposition}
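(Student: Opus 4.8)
The plan is to adapt the scaling and Rad\'o--Markushevich argument used in the proof of Theorem~1, replacing the vertical segment by a straight slit making an angle ${\pi\over2}(1-c)$ with $\mathbb R$. Since $\gamma(t)\in C^1$ is tangential at the origin to the ray $L_c=\{re^{i\pi(1-c)/2}:r\ge0\}$, and since by hypothesis the arc-length parameter still satisfies $s=s(t)=A\sqrt t+o(\sqrt t)$ with $A\neq0$, substituting the $C^1$-expansion $\gamma(s)=e^{i\pi(1-c)/2}s+o(s)$ gives $\gamma(s(t))=Ae^{i\pi(1-c)/2}\sqrt t+o(\sqrt t)$. As in Theorem~1 I would introduce the rescaled maps $g_n(w,t):=\sqrt n\,f^{-1}(w/\sqrt n,t/n)$, note that their target slits $\gamma_n(t)=\sqrt n\,\gamma(t/n)$ converge uniformly to the straight segment $\sigma_c(t):=[0,Ae^{i\pi(1-c)/2}\sqrt t]$, and identify the limiting map as the explicit conformal map $h_c(w,t)$ of $\mathbb H$ onto $\mathbb H\setminus\sigma_c(t)$.

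The limiting map is the classical power-type map
\[
h_c(w,t)=\bigl(w-a(t)\bigr)^{\,(1+c)/2}\bigl(w-b(t)\bigr)^{\,(1-c)/2},
\]
with suitable real endpoints $a(t)<0<b(t)$ chosen so that the two preimage intervals of the slit sides are $I_{1}=[a(t),0]$ and $I_{2}=[0,b(t)]$ (or the reverse, according to which side is $\gamma_1$); the exponents $(1\pm c)/2$ produce the interior opening angles $\pi(1\mp c)/2$ at the base of the slit, which is exactly the tangency condition. Matching the hydrodynamic normalization $h_c(w,t)=w+O(1/w)$ fixes $a(t)+b(t)$ proportional to $c$ times the scale, and matching the $1/w$-coefficient with that of $g_n$, namely $-2t/w$, pins down $A$ and the lengths $\operatorname{meas}I_1$, $\operatorname{meas}I_2$ in terms of $c$ and $t$; a short computation then yields
\[
\frac{\operatorname{meas}I_1}{\operatorname{meas}I_2}=\frac{1-c}{1+c}.
\]
With these ingredients the same eight-step chain of equalities as in Theorem~1 goes through verbatim: conformal invariance of harmonic measure under $f(z,t/n)$, the visual-angle formula for $\omega(i;I',\mathbb H)$, replacing the angle by its tangent and then by the measure of the interval in the infinitesimal limit, the proportionality $\operatorname{meas}I_{kn}=\sqrt n\,\operatorname{meas}I'_{kn}$, and finally the uniform convergence $g_n\to h_c$ which forces $I_{kn}\to I_k$. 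This gives $\lim_{t\to+0}M_1(t)/M_2(t)=(1-c)/(1+c)$.

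The main obstacle, and the only place where genuine care beyond bookkeeping is needed, is the passage from the angle ratio to the length ratio: for $c\neq0$ the two intervals $I'_{1n}$ and $I'_{2n}$ have comparable but unequal lengths, so one must verify that as $n\to\infty$ the chord lengths shrink to zero while their ratio converges, justifying $\tan(\pi\omega)\sim\pi\omega$ and $\omega\sim\operatorname{meas}/\pi$ (the base point is at $i$, fixed, while the intervals collapse to the origin, so both small-angle approximations are legitimate). A secondary point requiring attention is the correct identification of the interior angles of $h_c$ at the slit base and the matching of the leading coefficients, i.e.\ checking that the Schwarz--Christoffel data $\{a(t),b(t),(1\pm c)/2\}$ indeed reproduce both the geometry of $\sigma_c(t)$ and the normalization $w-2t/w+O(1/w^2)$; once $A$ is so determined the length ratio $(1-c)/(1+c)$ is forced and is independent of $t$, which is what makes the limit exist. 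The case $c=0$ recovers Theorem~1, consistent with the value $1$.
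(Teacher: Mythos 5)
Your proposal is correct and follows exactly the route the paper sketches (rescale by $\sqrt n$, invoke Rad\'o--Markushevich convergence, identify the limit as the explicit power map onto $\mathbb H$ minus the straight slit at angle $\pi(1-c)/2$, then run the eight-step harmonic-measure chain). One small slip: with $h_c(w)=(w-a)^{(1+c)/2}(w-b)^{(1-c)/2}$ and the hydrodynamic constraint $(1+c)a+(1-c)b=0$, the preimage of the slit tip is the critical point $w_0=\tfrac{(1+c)b+(1-c)a}{2}$, which equals $a+b\neq0$ when $c\neq0$, so the preimages of the two slit sides are $[a,w_0]$ and $[w_0,b]$, not $[a,0]$ and $[0,b]$ as you wrote; it happens that $w_0-a=b$ and $b-w_0=|a|$, so the resulting length ratio $|a|/b=\tfrac{1-c}{1+c}$ is still correct, but the intervals themselves should be stated with the split at $w_0$.
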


We omit the proof which can follow the steps of Theorem 1 where the slit perpendicular to $\mathbb
R$ was generated by the trivial explicit map $h(w,t)$. Instead, the straight line under the angle
${\pi\over2}(1-c)$ is generated by the implicitly given map in \cite{Kager} which solves the
L\"owner differential equation (\ref{Low}) with the driving function $\lambda(t)=c'\sqrt t$.
Therefore the proof is more complicated technically but does not contain any new ideas.

\section{Driving terms with higher Lipschitz orders}

\begin{theorem}
Let $f(z,t)$ be a solution to the chordal L\"owner equation (\ref{Low}) with the driving function
$$\lambda(t)=At^{\alpha}+o(t^{\alpha}),\;\;\;t\to0,\;\;\;\alpha>\frac{1}{2},\;\;\;A\neq0.$$
Then the singular solutions $f_1(0,t)$ and $f_2(0,t)$ to (\ref{Low}) satisfy the following
relations
$$f_1(0,t)=2\sqrt t+o(\sqrt t),\;\;\;f_2(0,t)=-2\sqrt t+o(\sqrt t),\;\;\;t\to0.$$
\end{theorem}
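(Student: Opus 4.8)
The plan is to mimic the argument already used for Theorem 1, exploiting the same rescaling and normal‑families machinery, but now with a driving term that vanishes to higher order than $\sqrt t$. First I would introduce the scaled maps $g_n(w,t):=\sqrt n\,f^{-1}(w/\sqrt n,\,t/n)$, exactly as in Section 2. From the hydrodynamic normalization \eqref{exp} applied to $f$, one gets $g_n(w,t)=w-\tfrac{2t}{w}+O(1/w^2)$ as $w\to\infty$, uniformly in $n$. The key point is that the scaled driving term is $\sqrt n\,\lambda(t/n)=\sqrt n\,A(t/n)^{\alpha}+o(\cdot)=A\,t^{\alpha}\,n^{1/2-\alpha}+o(\cdot)$, which tends to $0$ as $n\to\infty$ because $\alpha>\tfrac12$. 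So the tip $g_n(\sqrt n\,\lambda(t/n),t)=\sqrt n\,\gamma(t/n)$ of the slit $\gamma_n(t)$ must be checked to converge; using \eqref{len} (the arc‑length asymptotic $s(t)=A_0\sqrt t+o(\sqrt t)$, here with its own constant) together with $\gamma(s)=is+o(s)$ one obtains $\sqrt n\,\gamma(t/n)\to i\,A_0\sqrt t$ as $n\to\infty$, uniformly for $t$ in a compact interval.

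Next I would apply the Radó–Markushevich kernel‑convergence / Carathéodory‑type theorem, just as in Section 2, to conclude that $g_n(w,t)\to h(w,t):=\sqrt{w^2-A_0^2 t}$ uniformly on $\mathbb H\cup\mathbb R$, where $h$ is the explicit map of $\mathbb H$ onto $\mathbb H\setminus[0,iA_0\sqrt t]$. Comparing coefficients in the normalization, $h(w,t)=w-\tfrac{A_0^2 t}{2w}+O(1/w^2)$ must match $w-\tfrac{2t}{w}+\cdots$, forcing $A_0=2$, so the limiting slit is exactly $[0,2i\sqrt t]$. Now the singular solutions: $f_1(0,t)$ and $f_2(0,t)$ are the endpoints of the image segment $g_1^{-1}(\cdot)$ applied to $\gamma(t)$, and $\sqrt n\,f_k(0,t/n)$ are the endpoints of $g_n^{-1}$ applied to $\gamma_n(t)$. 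Uniform convergence $g_n\to h$ transfers to the inverse maps on $\mathbb H\cup\mathbb R$, so the image segments converge, and the image of $[0,2i\sqrt t]$ under $h^{-1}$ is $[-2\sqrt t,2\sqrt t]$, giving $\sqrt n\,f_1(0,t/n)\to 2\sqrt t$ and $\sqrt n\,f_2(0,t/n)\to -2\sqrt t$. Finally one removes the artificial integer sequence exactly as in the comment after Theorem 1: replace $n$ by an arbitrary $j_n\to\infty$ and rerun the estimates, which yields $f_1(0,t)=2\sqrt t+o(\sqrt t)$ and $f_2(0,t)=-2\sqrt t+o(\sqrt t)$ as $t\to0$.

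The step I expect to be the main obstacle is justifying the convergence of the tip $\sqrt n\,\gamma(t/n)$ and hence the applicability of the kernel‑convergence theorem, since this is where one must invoke \eqref{len} for a slit whose driving term has order $t^\alpha$ with $\alpha>\tfrac12$ — one needs the arc‑length expansion $s(t)=A_0\sqrt t+o(\sqrt t)$ with $A_0\neq0$ to hold in this regime, together with the $C^1$ hypothesis so that $\gamma(s)=(\text{unit tangent at }0)\,s+o(s)$; the direction of that tangent is not a priori vertical here, so in principle $\gamma_n(t)$ converges to a segment $[0,A_0 e^{i\theta}\sqrt t]$, and one must argue that the hydrodynamic coefficient $2t$ — which is independent of the slit's direction — forces $|A_0|=2$ while the slit's inclination is irrelevant to the endpoints' real‑axis limit up to the scaling. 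A secondary technical point is upgrading uniform convergence of $g_n$ to uniform convergence of $g_n^{-1}$ on the closed half‑plane, which follows from the classical fact that uniform convergence of conformal maps with boundary normalization implies uniform convergence of inverses on the closure, but should be stated carefully. Everything else is bookkeeping already carried out in Section 2.
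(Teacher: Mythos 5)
Your proposal takes a genuinely different route from the paper, but it has a gap that I do not think you can close with the hypotheses of Theorem~3 alone. The paper's proof is a direct ODE argument on the singular solution itself: writing $g(t)=f_1(0,t)/\sqrt t$, using $|\lambda(t)|<\epsilon\sqrt t$ to get the differential inequality
$$
\frac{dg}{dt}>\frac{1}{t}\,\frac{4-g^2-\epsilon g}{2(g+\epsilon)},
$$
then ruling out $g(t_0)<g_1(\epsilon)$ by an integration/contradiction argument (if $g$ dipped below the nearby root of $4-g^2-\epsilon g$ it would have to blow up logarithmically as $t\to0$), which yields the lower bound $f_1(0,t)\geq g_1(\epsilon)\sqrt t$, and then feeding that back into the equation to get the matching upper bound. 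The two bounds squeeze $g(t)$ to $2$ as $\epsilon\to0$. This uses nothing but the Löwner ODE and the asymptotics of~$\lambda$.

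Your plan, by contrast, tries to recycle the geometric/rescaling machinery of Section~2: form $g_n(w,t)=\sqrt n\,f^{-1}(w/\sqrt n,t/n)$, argue that the rescaled hulls $\sqrt n\,\gamma(t/n)$ converge to the vertical segment $[0,2i\sqrt t]$, invoke Rad\'o--Markushevich to get $g_n\to h(w,t)=\sqrt{w^2-4t}$, and read off the endpoints. The concrete gap is at the step you yourself flag: to get convergence of the rescaled hulls you invoke the arc-length asymptotic~(\ref{len}) and the $C^1$ hypothesis so that $\gamma(s)=e^{i\theta}s+o(s)$. But Theorem~3 assumes only a hypothesis on the \emph{driving term} $\lambda(t)=At^\alpha+o(t^\alpha)$ with $\alpha>1/2$. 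It does \emph{not} assume the generated hull is a $C^1$ slit, let alone that~(\ref{len}) holds; and~(\ref{len}) in \cite{IPV} is derived precisely for $C^1$ slits perpendicular to $\mathbb R$, a geometric setting not available here. In fact, a continuous driving term only guarantees a growing family of hulls, not a slit, so the objects $\gamma$, $s$, $\gamma(s)$ that your argument manipulates may not exist. In short, you are trying to prove a statement about the driving term by assuming geometric information about the hull that one would normally want to \emph{deduce} from it, which is essentially circular.

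Your scheme could be salvaged by replacing Rad\'o--Markushevich with a continuity theorem for the Löwner flow with respect to the driving term: the rescaled drivers $\sqrt n\,\lambda(t/n)=A\,t^\alpha n^{1/2-\alpha}+o(\cdot)\to0$ uniformly on $[0,T]$, and uniform convergence of drivers does imply Carath\'eodory convergence of the hulls to those generated by $\lambda\equiv0$, namely $[0,2i\sqrt t]$. That would give the convergence of $g_n$ and hence of the endpoints. But this imports a nontrivial external stability result and still requires care in transferring convergence of $g_n$ to convergence of $g_n^{-1}$ on the closed half-plane near the singular points. The paper's differential-inequality argument avoids all of this, works directly under the stated hypotheses, and is considerably more elementary; it also sets up the machinery reused verbatim for Theorem~2. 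I would recommend abandoning the rescaling approach here and proving the differential inequality bounds on $g(t)=f_1(0,t)/\sqrt t$ instead.
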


\begin{proof}
For all $t>0$, $$f_2(0,t)<\lambda(t)<f_1(0,t)\;\;\;\text{and}\;\;\; f_2(0,t)<0<f_1(0,t).$$

For given $\epsilon>0$, $|\lambda(t)|<\epsilon\sqrt t$ for $t>0$ small enough. Therefore, for such
$t>0$,
\begin{equation}
\frac{df_1(0,t)}{dt}=\frac{2}{f_1(0,t)-\lambda(t)}>\frac{2}{f_1(0,t)+\epsilon\sqrt t}. \label{5}
\end{equation}
Denote $$g(t)=\frac{f_1(0,t)}{\sqrt t}$$ and obtain from (\ref{5})) that
\begin{equation}
\frac{dg}{dt}>\frac{1}{t}\;\frac{4-g^2(t)-\epsilon g(t)}{2(g(t)+\epsilon)},\;\;\; 0<t<T(\epsilon).
\label{6}
\end{equation}

The polynomial $4-g^2-\epsilon g$ has two roots $g_2(\epsilon)<0$ and
$$g_1(\epsilon)=\frac{\sqrt{\epsilon^2+16}-\epsilon}{2},\;\;\;0<g_1(\epsilon)<2.$$

Suppose that there exists $t_0\in(0,T(\epsilon))$ such that $g(t_0)<g_1(\epsilon)$. It follows from
(\ref{6}) that $g'(t_0)>0$, and $g(t)$ decreases together with $t$ varying from $t_0$ to 0. This
implies that $$\frac{4-g^2(t)-\epsilon g(t)}{2(g(t)+\epsilon)}>c>0,\;\;\;0<t\leq t_0,$$ and
inequality (\ref{6}) reduces to
\begin{equation}
\frac{dg}{dt}>\frac{c}{t},\;\;\;0<t\leq t_0. \label{7}
\end{equation}
Integrating (\ref{7}) from $\delta>0$ to $t_0$, we obtain
$$g(t_0)>g(\delta)+c\log{\frac{t_0}{\delta}}>c\log{\frac{t_0}{\delta}},$$ which contradicts
the condition $g(t_0)<g_1(\epsilon)$ for $\delta$ small enough.

So, for all $t\in(0,T(\epsilon))$, $g(t)\geq g_1(\epsilon)$. Going back to $f_1(0,t)$, we see that
\begin{equation}
f_1(0,t)\geq g_1(\epsilon)\sqrt t,\;\;\;t\in(0,T(\epsilon)). \label{8}
\end{equation}
Substitute (\ref{8}) in (\ref{Low}) and obtain that
$$\frac{df_1(0,t)}{dt}=\frac{2}{f_1(0,t)-\lambda(t)}\leq\frac{2}{g_1(\epsilon)\sqrt
t-\epsilon\sqrt t},\;\;\;0<t<T(\epsilon).$$ Integrate this inequality from 0 to $t$ and obtain
that
\begin{equation}
f_1(0,t)\leq\frac{4}{g_1(\epsilon)-\epsilon}\sqrt t,\;\;\;0<t<T(\epsilon). \label{9}
\end{equation}

Inequalities (\ref{8}) and (\ref{9}) mean together that $$\lambda(t)=o(f_1(0,t)),\;\;\;t\to0,$$ and
$$\frac{df_1(0,t)}{dt}=\frac{2}{f_1(0,t)+o(f_1(0,t))}.$$ This leads to the first statement of
Theorem 3 for $f_1(0,t)$.

The second statement of Theorem 3 for $f_2(0,t)$ is proved similarly.

\end{proof}

\section{Proof of Theorem 2 and Corollary 1}

{\it Proof of Theorem 2.} For given $\epsilon'>0$, $|\lambda(t)-c\sqrt t|<\epsilon'\sqrt t$ for
$t>0$ small enough. Therefore, for such $t>0$,
\begin{equation}
\frac{df_1(0,t)}{dt}=\frac{2}{f_1(0,t)-\lambda(t)}>\frac{2}{f_1(0,t)-(c-\epsilon')\sqrt t}.
\label{10}
\end{equation}
Denote $$g(t)=\frac{f_1(0,t)}{\sqrt t}$$ and obtain from (\ref{10}) that
\begin{equation}
\frac{dg}{dt}>\frac{1}{t}\;\frac{4-g^2(t)+(c-\epsilon')g(t)}{2(g(t)-c+\epsilon')},\;\;\;
0<t<T(\epsilon'). \label{11}
\end{equation}

The polynomial $4-g^2+(c-\epsilon')g$ has two roots $g_2(\epsilon')<0$ and
$$g_1(\epsilon')=\frac{\sqrt{(c-\epsilon')^2+16}+c-\epsilon'}{2},\;\;\;0<g_1(\epsilon')<2.$$

Suppose that there exists $t_0\in(0,T(\epsilon'))$ such that $g(t_0)<g_1(\epsilon')$. It follows
from (\ref{11}) that $g'(t_0)>0$, and $g(t)$ decreases together with $t$ varying from $t_0$ to 0.
This implies that
$$\frac{4-g^2(t)+(c-\epsilon')g(t)}{2(g(t)+\epsilon')}>p>0,\;\;\;0<t\leq t_0,$$ and inequality
(\ref{11}) reduces to
\begin{equation}
\frac{dg}{dt}>\frac{p}{t},\;\;\;0<t\leq t_0. \label{12}
\end{equation}
Integrating (\ref{12}) from $\delta>0$ to $t_0$, we obtain
$$g(t_0)>g(\delta)+p\log{\frac{t_0}{\delta}}>p\log{\frac{t_0}{\delta}},$$ which contradicts
the condition $g(t_0)<g_1(\epsilon')$ for $\delta$ small enough.

So, for all $t\in(0,T(\epsilon'))$, $g(t)\geq g_1(\epsilon')$. Going back to $f_1(0,t)$, we see
that
\begin{equation}
f_1(0,t)\geq g_1(\epsilon')\sqrt t,\;\;\;t\in(0,T(\epsilon')). \label{13}
\end{equation}
Substitute (\ref{13}) in (\ref{Low}) and obtain that
$$\frac{df_1(0,t)}{dt}=\frac{2}{f_1(0,t)-\lambda(t)}\leq\frac{2}{g_1(\epsilon')\sqrt
t-(c+\epsilon')\sqrt t},\;\;\;0<t<T(\epsilon').$$ Integrate this inequality from 0 to $t$ and
obtain that
\begin{equation}
f_1(0,t)\leq\frac{4}{g_1(\epsilon')-(c+\epsilon')}\sqrt t,\;\;\;t\in(0,T(\epsilon')). \label{14}
\end{equation}

If $c=0$, then inequalities (\ref{13}) and (\ref{14}) prove the first statement of Theorem 2 for
$f_1(0,t)$.

Let $c\neq0$. Write two inequalities (\ref{13}) and (\ref{14}) in the form
\begin{equation}
k_1'\sqrt t<f_1(0,t)<k_1''\sqrt t,\;\;\;t\in(0,T(\epsilon')). \label{15}
\end{equation}

Show that inequalities (\ref{15}) admit a recurrent improvement converging to the desired point.
Indeed, substitute the left inequality (\ref{15}) in (\ref{Low}) and obtain
$$\frac{df_1(0,t)}{dt}=\frac{2}{f_1(0,t)-c\sqrt t+o(\sqrt t)}<\frac{2}{(k_1'-c)\sqrt t+o(\sqrt
t)}$$ which gives after integration the improved right inequality (\ref{15}) $$f_1(0,t)<k_2''\sqrt
t+o(\sqrt t),\;\;\;t\to0,\;\;\;k_2''=\frac{4}{k_1'-c}.$$ Now substitute this inequality in
(\ref{Low}) and obtain $$\frac{df_1(0,t)}{dt}>\frac{2}{(k_2''-c)\sqrt t+o(\sqrt t)}$$ which gives
after integration the improved left inequality (\ref{15}) $$f_1(0,t)>k_2'\sqrt t+o(\sqrt
t),\;\;\;t\to0,\;\;\;k_2'=\frac{4}{k_2''-c}.$$

Repeat the procedure and obtain at the $n$-th step
\begin{equation}
k_n'\sqrt t+o(\sqrt t)<f_1(0,t)<k_n''\sqrt t+o(\sqrt t), \label{16}
\end{equation}
where $$k_n'=\frac{4}{\frac{4}{k_{n-1}'-c}-c},\;\;\;k_n''=\frac{4}{\frac{4}{k_{n-1}''-c}-c}.$$
Inequalities $k_{n-1}'>c$, $k_{n-1}''>c$, $4>c(k_{n-1}'-c)$, $4>c(k_{n-1}''-c)$ are verified
by elementary sources. Both sequences $\{k_n'\}$ and $\{k_n''\}$ converge monotonically to
$(c+\sqrt{c^2+16})/2$. To every $\epsilon>0$ there exists $n\in\mathbb N$ such that
$$0<\frac{c+\sqrt{c^2+16}}{2}-k_n'<\frac{\epsilon}{2},\;\;\;
0<k_n''-\frac{c+\sqrt{c^2+16}}{2}<\frac{\epsilon}{2},$$ and $$\frac{o(\sqrt t)}{\sqrt
t}<\frac{\epsilon}{2}$$ for both terms $o(\sqrt t)$ in (\ref{16}). This proves the first statement
of Theorem 2 for $f_1(0,t)$.

The second statement of Theorem 2 for $f_2(0,t)$ is proved similarly. \vskip2mm

{\it Proof of Corollary 1.} According to Theorem 1, for $C^1$ curves $\gamma(t)$ which are
perpendicular to $\mathbb R$, $f_1(0,t)$ and $f_2(0,t)$ have the same main term in the asymptotic
expansion. From the other side, according to Theorem 2, $f_1(0,t)$ and $f_2(0,t)$ differ by their
main asymptotic terms. This proves Corollary 1.

\end{document}